\documentclass[11pt,reqno]{amsart}
\usepackage[utf8]{inputenc}
\usepackage{amsmath, amsthm}
\usepackage{amssymb}
\usepackage[margin=1in]{geometry}
\usepackage[mathscr]{euscript}
\usepackage{titlesec}
\usepackage{braket}
\usepackage{shortcutpackagev11}
\usepackage{arydshln}
\usepackage{verbatim}
\usepackage{mathtools}
\usepackage{graphicx}
\usepackage[all]{xypic}
\graphicspath{ {./images/} }
\usepackage{url}
\usepackage{quiver}
\usepackage{parskip}
\usepackage[T1]{fontenc}
\usepackage{hyperref}
\usepackage{thm-restate}

\usepackage[
  backend=biber,
  style=alphabetic,
  sorting=nyt,
  maxbibnames=99,
  uniquename=true
]{biblatex}
\makeatletter
\renewcommand{\section}{\@startsection{section}{1}%
  \z@{.7\linespacing\@plus\linespacing}{.5\linespacing}%
  {\normalfont\bfseries\centering}}
\makeatother

\numberwithin{equation}{section}

\title{The Bias Conjecture for One-Parameter, Non-Isotrivial Elliptic Curve Families}

\author{Lucas Chen}
\address{Department of Mathematics, University of Chicago}
\email{\href{mailto:}{lucasch@uchicago.edu}}

\author{Joshua Im}
\address{Department of Mathematics, Texas A\&M University}
\email{\href{mailto:}{jri@tamu.edu}}

\author{Steven J. Miller}
\address{Department of Mathematics, Williams College}
\email{\href{mailto:}{sjm1@williams.edu}}

\author{Devayani Pradhan}
\address{Department of Mathematics, University of Michigan}
\email{\href{mailto:}{pradhand@umich.edu}}
\thanks{Corresponding Author: Devayani Pradhan}

\date{}

\begin{document}

\begin{abstract}
    We investigate Miller's bias conjecture for general one-parameter families of elliptic curves over $\mathbb{Q}$ with non-constant $j$-invariants. Utilizing methods developed by Michel, we view the second moment as the sum of distinct cohomological components: a main $p^2$ term, a $p^{3/2}$ term arising from the first cohomology over the projective line, and terms of order $p$ or lower corresponding to singular fibers. By assuming the generalized Sato-Tate conjecture, we employ representation and motive theory to demonstrate that the coefficient of the $p^{3/2}$ term averages to zero in the limit. We then independently analyze the lower-order contributions without assuming Sato-Tate. By applying the Chebotarev density theorem and analyzing Galois representations, we show that the contributions to the order $p$ term from additive singular fibers also average to zero. Consequently, the only remaining average contribution to the $p$ term is $-B_pp$, where $B_p$ denotes the number of ``bad'' points where the discriminant vanishes. Because $B_p$ averages to a strictly positive value, the overall coefficient of the $p$ term averages to a strictly negative value in the limit, thereby establishing the bias conjecture for these families.
\end{abstract}

\maketitle

\newtheorem{theorem}{Theorem}[section]
\newtheorem{proposition}[theorem]{Proposition}
\newtheorem{lemma}[theorem]{Lemma}
\newtheorem{corollary}[theorem]{Corollary}

\theoremstyle{definition}
\newtheorem{definition}[theorem]{Definition}
\newtheorem{example}[theorem]{Example}

\theoremstyle{remark}
\newtheorem{conjecture}[theorem]{Conjecture}
\newtheorem{remark}[theorem]{Remark}

\setlength{\parindent}{0pt}


\section{Introduction}

The arithmetic of elliptic curves over global fields relates directly to their local behavior at finite primes. The Birch and Swinnerton-Dyer (BSD) conjecture highlights the importance of analyzing the statistical distribution of point counts when these curves are reduced modulo primes $p$.

Aggregating these local point counts across one-parameter families of elliptic curves exposes broader arithmetic structure. The Katz-Sarnak density conjecture \cite{KatzSarnak1999} models the distribution of these normalized error terms using Random Matrix Theory. For typical non-CM elliptic curves, this limiting behavior follows the Sato-Tate semicircular distribution \cite{Taylor2008}.

Consider a general family, $E_t$, of elliptic curves over $\mathbb{Z}$. We are broadly interested in the properties of rational point counts of curves in this family. Commonly, this quantity is characterized through the \textit{trace of Frobenius}, written as $a_t(p)= p+1-\# E_t(\mathbb{F}_p)$. The modern significance of this number is seen through a simple corollary of the standard Grothendieck-Lefschetz trace formula: \footnote{Michel's argument rests on a cohomological generalization of this quantity, though he does not explicitly detail the one-curve case; we do so here to more easily motivate this machinery-heavy picture for the computational side of the field.}\begin{theorem}For $E_t$ a complete curve nonsingular over $\mathbb{F}_p$ and $\mathcal{F}$ a constructible $\mathbb{Q}_\ell$-sheaf, we have$$\sum_{x\in E_t(\mathbb{F}_p)} \mathrm{tr}(\mathrm{Frob}_{p,x}\mid \mathcal{F}_x)\ =\ \sum_{i=0}^2 (-1)^i\mathrm{tr}(\mathrm{Frob}_p\mid H^i_c(E_t, \mathcal{F}))$$\end{theorem}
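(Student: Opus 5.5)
This statement is the Grothendieck--Lefschetz trace formula specialized to a curve. The plan is to deduce it from the general formula for a separated scheme $X$ of finite type over $\mathbb{F}_p$,
$$\sum_{x\in X(\mathbb{F}_p)} \mathrm{tr}(\mathrm{Frob}_{p,x}\mid \mathcal{F}_x) = \sum_{i} (-1)^i\,\mathrm{tr}(\mathrm{Frob}_p\mid H^i_c(X_{\overline{\mathbb{F}}_p}, \mathcal{F})),$$
which we take as known from SGA $4\tfrac12$ (Rapport, and \emph{Fonctions $L$ modulo $\ell^n$ et modulo $p$}). Specializing it to $X=E_t$ requires only two inputs. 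The first is that $E_t$ is of finite type over $\mathbb{F}_p$, which is clear. The second is that the alternating sum is really over $i=0,1,2$. For this we use Artin vanishing / cohomological dimension: for a curve over an algebraically closed field and a constructible (torsion, hence by limit $\mathbb{Q}_\ell$) sheaf, $H^i_c=0$ for $i>2$. Since $E_t$ is complete, $H^i_c=H^i$.

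If we instead want a self-contained sketch of the general formula, we proceed in four steps.
\begin{enumerate}
\item By dévissage on constructible sheaves, we use additivity of both sides in short exact sequences, via the long exact sequence in $H_c$. This reduces to two cases: sheaves $j_!\mathcal{L}$ with $\mathcal{L}$ lisse on a dense open $U\subset E_t$, and skyscraper sheaves at closed points. For skyscrapers, both sides are computed directly, since $H^0$ of a point is the stalk induced from the residue field.
\item We pass from $\mathbb{Q}_\ell$ to $\mathbb{Z}/\ell^n$-coefficients and work with perfect complexes, so that traces are defined. We then take the inverse limit.
\item For $\mathcal{F}$ constant, we apply the Lefschetz fixed point formula to the Frobenius endomorphism $F$. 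Its graph meets the diagonal transversally, since $dF=0$. So the number of fixed points, which is $\#E_t(\mathbb{F}_p)$, equals $\sum(-1)^i\mathrm{tr}(F^*\mid H^i)$.
\item For lisse $\mathcal{L}$, we trivialize $\mathcal{L}$ on a finite étale Galois cover $V\to U$ with group $G$. We then use the twisted Frobenius correspondences $g\circ F$, together with orthogonality of characters of $G$, to reduce to the constant case on $V$.
\end{enumerate}

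The main obstacle is step 4, together with the passage from $U$ to the compactly supported $j_!$ extension. Handling wild ramification at $E_t\setminus U$ is exactly where Deligne's noncommutative trace argument (``Rapport'') is needed. I would not reprove this. I would cite it, and note that in the paper's application $\mathcal{F}=\mathbb{Q}_\ell$ is constant. In that case step 3 alone suffices: $H^0=\mathbb{Q}_\ell$ with trace $1$, $H^2=\mathbb{Q}_\ell(-1)$ with trace $p$, and $H^1$ has Frobenius trace $a_t(p)$. This recovers $\#E_t(\mathbb{F}_p)=p+1-a_t(p)$, which is the motivating identity.
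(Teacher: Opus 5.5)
Your proposal is correct and follows the paper's route. The paper gives no independent argument: it presents the statement as an immediate specialization of the standard Grothendieck--Lefschetz trace formula, then takes $\mathcal{F}=\mathbb{Q}_\ell$ and uses properness to identify $H_c$ with $H$, recovering $\#E_t(\mathbb{F}_p)=1-a_t(p)+p$ exactly as in your final paragraph. Your supplementary points are the right details to fill in: cohomological dimension $2$ truncating the sum at $i=2$, $H^i_c=H^i$ by completeness, and reading the cohomology over $E_t\otimes\overline{\mathbb{F}}_p$.

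One small correction to the optional sketch. The noncommutative trace in Deligne's \emph{Rapport} is needed because $\ell$ may divide $|G|$, so $\mathbb{Z}/\ell^n[G]$ is not semisimple and ordinary character orthogonality is unavailable. It is not needed because of wild ramification: the boundary, wildly ramified or not, is handled simply by reducing to constant coefficients on twisted forms of $V$.
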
When we take $\mathcal{F}=\mathbb{Q}_\ell$ itself, the left side resolves to $\#E_t(\mathbb{F}_p)$. Explicitly computing the other two traces and noting that $E_t$ is proper (which equates $H_c$ and $H_\mathrm{et}$) yields our formula$$\text{tr}(\text{Frob}_p\mid H^1_\text{et}(E_t, \mathbb{Q}_\ell))\ =\ a_t(p)\ =\ p+1-\#E_t(p).$$The classical Hasse bound, where the trace of Frobenius originally appeared, is easily recovered by applying Weil's Riemann hypothesis to the Frobenius eigenvalue summands: $\vert{}\alpha_1\vert{}=\vert{}\alpha_2\vert{}=\sqrt q$.

The structural clarity of bounding through cohomology motivates our view of $a_t(p)$ through this lens; this follows the philosophy of \cite{Michel1995}. We apply this viewpoint to the problem of \textit{biases} in Frobenius traces over an elliptic curve family, which we now detail.

In his thesis \cite{Miller2002}, Steven J. Miller formulated the \textit{bias conjecture} concerning the lower-order terms of the moments of the trace. While the Katz-Sarnak framework predicts the main-order distribution, Miller observed that a lower-order (lower-exponent) term perturbs the convergence rate. The limiting behavior of this perturbation, labeled a \textit{bias}, connects to the structural properties of the family, such as its rank, via the Rosen-Silverman theorem \cite{Rosen1998}.

The significance of such a bias for a number of arithmetic quantities associated with the family has been thoroughly studied. A relationship to the elliptic curve family's ranks has been established for limiting behavior in the first moment in \cite{Rosen1998, Nagao1997}, and for higher moments in \cite{Michel1995}. Miller's thesis in particular investigates the arithmetic effect of such a perturbation on the $n$-level densities of the family's associated Hasse-Weil L-functions. The bias conjecture it formulated has often been studied in the following form, for a one-parameter elliptic curve family $E_t$ with non-constant $j(T)$-invariant:\begin{conjecture}In the second moment expansion of $E_t$, the largest lower-order term which does not average to zero is, on average, negative.\end{conjecture}A range of computational literature has been published verifying or questioning this conjecture, though theoretical explanations have only recently surfaced. Significant tests of the bias can be found in \cite{Asada2023, batterman2024applications, Cheek2026}. While numerical studies often focus on standard one-parameter families over the rationals, recent generalizations have shown that the phenomenon is highly dependent on the setting. For instance, when analyzing elliptic curves over finite fields whose traces of Frobenius lie in fixed arithmetic progressions, the bias is not universally negative, and can actually be strictly positive for a positive density of arithmetic progressions \cite{kane2025bias}. In this paper, we prove a sharpened statement of the bias conjecture via the cohomological viewpoint, previously examined with different methods by \cite{KN} for cubic pencils, and suggested in \cite{milneLEC}. Our proof suggests explanations for much of the computational literature.

Note that our proof is conditional on the Sato-Tate conjecture. The classical Sato-Tate conjecture predicts the statistical distribution of Frobenius traces for non-CM elliptic curves reduced over finite fields. This was proven for elliptic curves over totally real fields \cite{clozel2008automorphy}, and later generalized to all non-CM regular algebraic cuspidal automorphic representations of $\mathrm{GL}_2$ over totally real fields \cite{barnet2011sato}. Serre vastly generalized this to motives \cite{SerreMotivicGalois}, conjecturing that their normalized local $L$-factors are equidistributed according to the Haar measure on the motivic Sato-Tate group. While completely open for arbitrary motives, recent applications of potential automorphy lifting theorems have proven the generalized conjecture for certain non-generic abelian surfaces defined over totally real fields \cite{boxer2019abelian}.

\subsection{Results}

Michel \cite{Michel1995} established the following theorem on the second moment, that is, the sum $a_t(p)^2$ for $t\bmod p$: 
\begin{theorem}[Michel]
\begin{align}
\sum_{t(p)}a_t(p)^2\ = \ p^2+O\left(p^{3/2}\right)
\end{align}
\end{theorem}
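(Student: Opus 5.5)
We argue via the Grothendieck--Lefschetz trace formula applied to the family rather than to a single curve. Let $\pi\colon \mathcal{E}\to U$ be the elliptic fibration over the open set $U\subset\mathbb{P}^1_{\mathbb{F}_p}$ where the discriminant is nonzero. Let $\mathcal{F}=R^1\pi_*\mathbb{Q}_\ell$ be the associated rank-two lisse sheaf on $U$, pure of weight $1$, so that $\mathrm{tr}(\mathrm{Frob}_{p,t}\mid\mathcal{F}_t)=a_t(p)$ for $t\in U(\mathbb{F}_p)$. The bad fibres contribute only $O(B_p\,p)=O(p)$ to $\sum_t a_t(p)^2$, since $B_p\le\deg\Delta$ is bounded and $|a_t(p)|\le 1$ at bad $t$ (or $\le 2\sqrt p$ in any case). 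It therefore suffices to estimate $\sum_{t\in U(\mathbb{F}_p)}\mathrm{tr}(\mathrm{Frob}_t\mid \mathcal{F}_t^{\otimes 2})$.

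Decompose $\mathcal{F}\otimes\mathcal{F}=\mathrm{Sym}^2\mathcal{F}\oplus\wedge^2\mathcal{F}$, where $\wedge^2\mathcal{F}\cong\mathbb{Q}_\ell(-1)$ via the Weil pairing. The $\wedge^2$ piece contributes exactly $p\cdot\#U(\mathbb{F}_p)=p^2+O(p)$; this is the main term. For $\mathrm{Sym}^2\mathcal{F}$, which is pure of weight $2$, apply the trace formula on $U$:
\[
\sum_{t\in U(\mathbb{F}_p)}\mathrm{tr}(\mathrm{Frob}_t\mid\mathrm{Sym}^2\mathcal{F})=\sum_{i=0}^{2}(-1)^i\,\mathrm{tr}(\mathrm{Frob}_p\mid H^i_c(U_{\overline{\mathbb{F}}_p},\mathrm{Sym}^2\mathcal{F})).
\]
Since $U$ is affine, $H^0_c=0$. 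By coinvariants, $H^2_c$ is $(\mathrm{Sym}^2\mathcal{F})_{\pi_1^{\mathrm{geom}}}(-1)$. Deligne's Weil II gives weights $\le 2+1=3$ on $H^1_c$, so its trace is at most $(\dim H^1_c)\,p^{3/2}$.

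The key step, and the main obstacle, is showing $H^2_c=0$, i.e.\ that $\mathrm{Sym}^2\mathcal{F}$ has no geometric coinvariants. Here we use non-isotriviality: since $j(T)$ is nonconstant, the geometric monodromy of $\mathcal{F}$ is an open subgroup of $\mathrm{SL}_2$ (Zariski dense), and $\mathrm{Sym}^2$ of the standard representation is irreducible and nontrivial for $\mathrm{SL}_2$, hence has no invariants or coinvariants. To establish the monodromy claim, we would argue that a nonconstant $j$ forces a pole at some $t$, giving potentially multiplicative reduction there. The local monodromy then contains a nontrivial unipotent element, so the connected monodromy group is not a torus or finite, and is therefore $\mathrm{SL}_2$. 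Should that argument be delicate for small $p$, we would restrict to $p$ larger than a constant depending on the family, which suffices for a $O$-statement.

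Finally, we need $\dim H^1_c$ to be bounded independently of $p$. By Grothendieck--Ogg--Shafarevich, $-\chi_c(U,\mathrm{Sym}^2\mathcal{F})=3\chi$-correction plus the sum of Swan conductors at the points of $\mathbb{P}^1\setminus U$. For $p>3$ the ramification is tame, so the Swan terms vanish and the dimension is bounded by $3(B_p+1)$, uniformly in $p$. Combining the three contributions gives $\sum_t a_t(p)^2=p^2+O(p^{3/2})$.
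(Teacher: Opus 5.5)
Your route is the same one the paper takes over from Michel. You write $a_t(p)^2=\mathrm{tr}(\mathrm{Frob}_t\mid\mathrm{Sym}^2\mathcal{F})+p$ on $U$, apply the trace formula to $\mathrm{Sym}^2\mathcal{F}$, show $H^0_c=H^2_c=0$ (the latter via nonconstant $j$), and bound $H^1_c$ by Weil~II. The paper further splits $H^1_c$ into the pure weight-$3$ part $H^1(\mathbb{P}^1,j_*\mathrm{Sym}^2\mathcal{F})$ and local invariants of weight $\le 2$, which this statement does not need. Your Grothendieck--Ogg--Shafarevich count supplies the uniformity in $p$ that the paper leaves implicit. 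It should count the geometric points of $\mathbb{P}^1\setminus U$ (at most $\deg\Delta+1$) rather than the $\mathbb{F}_p$-rational count $B_p$; that gives $\dim H^1_c\le 3\deg\Delta$.

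There is one genuine gap, in the step you yourself called key. From a nontrivial unipotent in the local monodromy at a pole of $j$, you conclude that $G^0_{\mathrm{geom}}$ is ``not a torus or finite, and is therefore $\mathrm{SL}_2$.'' But the connected subgroups of $\mathrm{SL}_2$ also include the unipotent group $\mathbb{G}_a$ and the Borel subgroups, and a unipotent element is compatible with both. If $G^0_{\mathrm{geom}}$ were the upper unitriangular $\mathbb{G}_a$, the vector $e_1^2\in\mathrm{Sym}^2$ would be fixed by it, so nothing you have written rules out $H^2_c\neq 0$.

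The missing input is Deligne's theorem (Weil~II, 1.3.9 and 3.4.1(iii)): for a lisse sheaf that is pure of some weight, $G^0_{\mathrm{geom}}$ is semisimple. The only connected semisimple subgroups of $\mathrm{SL}_2$ are $1$ and $\mathrm{SL}_2$. A unipotent element of a linear algebraic group in characteristic $0$ lies in the identity component, so your unipotent then forces $G^0_{\mathrm{geom}}=\mathrm{SL}_2$. Consequently $\mathrm{Sym}^2\mathcal{F}$, which is self-dual up to twist via the Weil pairing, has neither geometric invariants nor coinvariants. With that citation added (the paper itself simply imports $H^2_c=0$ from Michel), your argument is complete.
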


Following his work, our approach takes a canonical view toward the second moment by splitting it into the sum of terms indexed by half-powers of $p$. These terms are derived from the $\ell$-adic cohomology of a naturally constructed sheaf. In particular, as \cite{Michel1995} proved, the second moment splits into the main $p^2$ term, a $p^{3/2}$ term arising from the first cohomology over $\P^1$, and terms of $p$ or lower order arising from the singular fibers.
\begin{remark}
    The ``$p^{3/2}$ term'' here refers to the term arising from the first cohomology of $\P_1$. The $p$ term refers to order $p$ contributions arising from singular fibers. It may be possible that in some cases, the coefficient of the $p^{3/2}$ term is on the order of $O(p^{-1/2})$, leading to an additional $p$ term in the formula. We do not consider such a term to be a ``$p$ term'' as it arises from the cohomology part.
\end{remark}

Thus we refine the bias conjecture to the following statement, with the same conditions.

\begin{conjecture}
Let $\Cc_{w,p}$ be the coefficient of $p^{w/2}$ in the cohomological decomposition of the second moment. For the largest $w<4$ where $\Cc_{w,p}$ does not average to $0$, $\Cc_{w,p}$ instead averages to less than $0$ over the primes.
\end{conjecture}

This refinement was notably made in \cite{KN}, but, as they note, the decomposition was implicit in \cite{Michel1995}, from which we reformulated this version of the conjecture. 

We conditionally prove this conjecture for families with non-constant $j$-invariant. In particular, we have the following theorem.
\begin{theorem}
    Let $E_t$ denote a family of one-parameter elliptic curves over $\Q$ with non-constant 
    $j$-invariant. The second moment of this family decomposes into various cohomological term. Let $\mathcal{C}_p^3$ and $\mathcal{C}_p^2$ denote the coefficients of the $p^{3/2}$ term and the $p$ term, respectively, in the cohomological decomposition of the second moment. Assuming the Sato-Tate conjecture holds, $\mathcal{C}_p^3$ will average to 0 in the limit. Additionally, $\mathcal{C}_p^2$ averages to a strictly negative value in the limit. 
\end{theorem}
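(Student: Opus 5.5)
We will follow Michel's cohomological setup. Let $\pi\colon \mathcal{E}\to U\subset \mathbb{P}^1$ be the smooth part of the family over the open set $U$ where the discriminant $\Delta(T)$ is nonzero, and let $\mathcal{F}=R^1\pi_*\mathbb{Q}_\ell$. Then $a_t(p)=\mathrm{tr}(\mathrm{Frob}_{p,t}\mid\mathcal{F}_t)$ for $t\in U(\mathbb{F}_p)$, and $a_t(p)^2$ is the trace on $\mathcal{F}\otimes\mathcal{F}=\mathrm{Sym}^2\mathcal{F}\oplus\mathbb{Q}_\ell(-1)$. We will use the Grothendieck--Lefschetz formula (the theorem stated above, applied to sheaves on $U$ and to $j_*$ extensions on $\mathbb{P}^1$) to write
\[
\sum_{t(p)}a_t(p)^2=\sum_{t\in U(\mathbb{F}_p)}\bigl(\mathrm{tr}(\mathrm{Frob}_t\mid\mathrm{Sym}^2\mathcal{F}_t)+p\bigr)+\sum_{t\notin U}a_t(p)^2 .
\]
The second piece equals $p(p+1-B_p)$ once we count $\infty$ appropriately, which gives the $p^2$ term and a $-B_p p$ term. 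For $j_*\mathrm{Sym}^2\mathcal{F}$, non-constancy of $j$ implies that the geometric monodromy is $\mathrm{SL}_2$ (Deligne/Katz). Hence $H^0_c=H^2_c=0$ for $\mathrm{Sym}^2$, and the trace sum reduces to $-\mathrm{tr}(\mathrm{Frob}_p\mid H^1(\mathbb{P}^1,j_*\mathrm{Sym}^2\mathcal{F}))$, minus local corrections at the points of $\mathbb{P}^1\setminus U$. By Deligne's Weil II, $H^1$ is pure of weight $3$, so it yields the $p^{3/2}$ term with $\mathcal{C}^3_p=-p^{-3/2}\mathrm{tr}(\mathrm{Frob}_p\mid H^1)$. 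The local corrections at bad points are the traces on the inertia invariants $(\mathrm{Sym}^2\mathcal{F})^{I_s}$, which have weight at most $2$ and give the $p$ term together with $-B_p p$.

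For $\mathcal{C}^3_p$, we observe that $H^1(\mathbb{P}^1_{\overline{\mathbb{Q}}},j_*\mathrm{Sym}^2\mathcal{F})$ is the $\ell$-adic realization of a motive $M$ over $\mathbb{Q}$, cut out of the cohomology of a fiber square of the elliptic surface (Scholl-type construction). We then decompose $M$ (or its Galois representation) into irreducible pieces $\rho_i$, none of which is a Tate twist of the trivial character after weight normalization. The reason is that $H^1$ has weight $3$, which is odd, so no piece can be a power of the cyclotomic character. For each nontrivial irreducible $\rho_i$, the generalized Sato--Tate conjecture gives equidistribution of normalized Frobenius in the motivic Sato--Tate group $G_i$. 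The average of $\mathrm{tr}\,\rho_i(\mathrm{Frob}_p)/p^{3/2}$ is then $\int_{G_i}\mathrm{tr}\,\rho_i\,d\mu$, which equals the multiplicity of the trivial representation and therefore vanishes. This is the step I expect to be the main obstacle. We must control the component group of $G_i$: a piece could become trivial on the identity component, or the trace might have nonzero mean on a non-identity coset. We must also rule out Artin-type constituents inside $M$. My first attempt is to argue via weights that any finite-image constituent would have weight $0$, which contradicts purity of weight $3$. The Haar integral of a nontrivial irreducible character of the full group $G_i$ is zero even if $G_i$ is disconnected, so irreducibility over $\mathbb{Q}$ suffices.

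For $\mathcal{C}^2_p$, we handle each bad point $s$ (a root of $\Delta$ or $\infty$) separately, working prime by prime over the residue field of $s$. At multiplicative fibers, $(\mathrm{Sym}^2\mathcal{F})^{I_s}$ is $1$-dimensional with Frobenius acting by $p\cdot\epsilon_s(p)^2=p$, where $\epsilon_s$ is the split/nonsplit sign. This contribution cancels exactly against $a_s(p)^2=1$ in the second piece, up to $O(1)$, and leaves $-p$ per bad point. At additive fibers, the local monodromy is finite, and the invariants are either $0$ or spanned by classes on which Frobenius acts through a character $\chi_s$ of a finite extension, cut out by the splitting field of the potentially good reduction (for example, quadratic twists give $\chi_s(p)p$ with $\chi_s$ a nontrivial quadratic character). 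Under Chebotarev, over $p$ these characters average to zero unless they are trivial, and triviality is impossible because the inertia acts nontrivially. Summing these contributions, $\mathcal{C}^2_p=-B_p+o(1)$ on average, where $B_p$ is the number of $\mathbb{F}_p$-roots of $\Delta$ (plus possibly $\infty$). By Chebotarev applied to the splitting field of $\Delta$, $B_p$ averages to the number of irreducible factors of $\Delta$ over $\mathbb{Q}$. This number is at least $1$ because $j$ is non-constant, so $\Delta$ is non-constant. Hence the average of $\mathcal{C}^2_p$ is strictly negative.
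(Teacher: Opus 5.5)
Your treatment of $\mathcal{C}^3_p$ and of the $-B_p p$ term matches the paper: odd weight rules out a trivial constituent, so the Haar mean vanishes. The additive-fiber step, however, has a genuine gap. It does not cover fibers of type $I_0^*$ (the case $d=2$), which the paper has to handle with a separate argument. There the local monodromy on $\mathcal{F}$ is $-I$, so inertia acts \emph{trivially} on $\mathrm{Sym}^2\mathcal{F}$. The invariants are therefore the whole $3$-dimensional generic stalk, and your claim that ``triviality is impossible because the inertia acts nontrivially'' fails here. Your quadratic-twist example is exactly the case where the character disappears, since it enters $\mathrm{Sym}^2$ squared. Frobenius then acts on $(\mathrm{Sym}^2\mathcal{F})^{I_{t_0}}$ as on $\mathrm{Sym}^2 H^1(E_0)$, where $E_0$ is the good special fiber of the quadratic twist. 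So the local correction is $-(a_{E_0}(p)^2-p)$. This is not a finite-order character times $p$, and Chebotarev says nothing about it. What is needed is that $a_{E_0}(p)^2/p$ has mean $1$ over primes (Sato--Tate or CM theory for $E_0$, or the symmetric-square $L$-function); this is the input the paper uses. The case is not marginal: in the paper's example $y^2=x^3+x+T^3$ the fiber at $\infty$ is $I_0^*$ with $E_0\colon Y^2=X^3+1$, and this term is what makes the $p$-coefficient $+1$ for $p\equiv 2 \pmod 3$.

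There are also two smaller errors.

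\textbf{Multiplicative fibers.} The invariant line of $\mathrm{Sym}^2\mathcal{F}$ is spanned by $e_1^2$, where $e_1$ spans the weight-$0$ line $\mathcal{F}^{I_s}$. So Frobenius acts by $\epsilon_s^2=1$, not by $p$. Only with eigenvalue $1$ does the local term $-1$ cancel $a_s(p)^2=1$ as you intend. Eigenvalue $p$ would add an extra $-1$ per multiplicative point and contradict your formula $\mathcal{C}^2_p=-B_p+o(1)$.

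\textbf{The case $d\in\{3,4,6\}$.} The sign is $\chi_d(p)$, the quadratic character of $\mathbb{Q}(\zeta_d)/\mathbb{Q}$; it records whether Frobenius commutes with inertia. The quantity to average is $\chi_d(p)$ times the number of roots mod $p$ of the relevant irreducible factor of $\Delta$. Nontriviality of inertia does not force this mean to be zero: it equals $1$ when $\zeta_d$ lies in the field generated by a root. Such a nonzero mean only pushes $\mathcal{C}^2_p$ further negative, so the sign conclusion survives, but your stated justification is not correct.
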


In Section \ref{Proof}, we apply the Sato-Tate conjecture to the cohomological part that yields the $p^{3/2}$ term and use results from representation theory and motive theory to show, correspondingly, that $\Cc_p^3$ averages to 0 in the limit. Then, we find that the $p$ term arises from parts associated to ``bad'' points. In particular, terms arising from the contribution of the fibers of additive reduction in the cohomological decomposition contribute to $\Cc_p^2$. We show that this contribution goes to zero. Thus, the only contribution to the $p$ term in the limit is given by $-B_pp$ where $B_p$ is the number of fibers of bad reduction. We show that  $B_p$ averages to a strictly positive value, proving the bias conjecture.

\section{Proof}\label{Proof}

\subsection{Preliminaries}
We begin by following the framework in \cite{Michel1995}. Fix a prime $\ell$ and define our one-parameter elliptic curve family over $\Z$, that is, the scheme over $\Z$ given by
\begin{align}
V:y^2+a_1(T)xy+a_3(T)y\ =\ x^3+a_2(T)x^2+a_4(T)x+a_6(T)
\end{align}
which is equipped with the projection morphism
\begin{align}
    f:V\to \A^1_{1/6\ell \Z}\ = \spec \left(\f{1}{6\ell}\Z[t] \right) 
\end{align}
sending $(x,y,t)\to T$.

To study the fiber $f^{-1}(t)$ for some $t$ and to describe this situation, we consider the sheaf \begin{align}
    \Ff\ :=\ R^1f_!\Q_\ell.
\end{align}
Note that this returns $\Ff_t=H^1(E_t)$. We can also find an $N\in \N$ such that $\Ff$ is smooth of rank 2 on
\begin{align}
    U\ :=\ \spec\left(\Z\left[T,\f{1}{6\ell N\De^1(T)}\right]\right)
\end{align}
where $\De^1(T)=\De(T)/\gcd(\De(T),\De'(T))$ and $\De(T)$ is the discriminant.

Note that \begin{align}
    \tr(\frob_p,t|\Ff_p)\ =\ a_t(p).
\end{align}

Recall that
\begin{align}
    \Aa_2(p)\ :=\ \sum_{t\in \F_p}(\tr(\frob_p|H^1(E_t)))^2.
\end{align}

For non-singular points, we can use the identity \begin{align}
    \tr(\frob_p)^2\ =\ \tr(\sym^2(\frob_p))+\det(\frob_p).
\end{align}
Since $\det\frob_p=p$, we have
\begin{align}
    \tr(\frob_p)^2\ =\ \tr(\sym^2(\frob))+p.
\end{align}

In this context, $\tr(\sym^2(\frob))$ means the trace of the induced action of $\frob$ on $\sym^2H^1(E_t)$.

\subsection{Bounds on cohomology}

The Grothendieck-Lefschetz trace formula applied to $\sym^2\Ff_p$ gives us the equation
\begin{align}
    \sum_{t\in U(\F_p)}\tr(\frob_{p,t}\mid\sym^2\Ff_p)\ =\ \sum_{i=0}^2 (-1)^i \tr(\frob_p\mid H^i_c(\overline{U_p},\sym^2\Ff_p)).
\end{align}
Here, $\overline{U_p}\ :=\ U\otimes \overline{\F_p}$.
Note that this derivation is given in \cite{Michel1995} on page 133 for a similar setup.

Following Michel's proof of Propositions 1.1 and 1.2, we can show $H^0$ and $H^2$ are trivial. On pages 133 and 134, \cite{Michel1995} explicitly states that 
\begin{align}
    H^0_c(\overline{U_p}, \sym^2\Ff_p))\ =\ 0
\end{align}
and \begin{align}
    H^2_c(\overline{U_p}, \sym^2\Ff_p))\ =\ 0.
\end{align}
It is important to note that the non-constant $j$-invariant condition is used in \cite{Michel1995} to show the second cohomology group is trivial. Without this condition, $H^2_c$ can be non-trivial, leading to additional terms for the second moment. This condition is crucial for the decomposition of the second moment into cohomological terms.

Then, following the proof of Proposition 1.2 in \cite{Michel1995}, letting $\Gg=\sym^2\Ff$, we get the short exact sequence on $\P_1\otimes \F_q$ given by
\begin{align}
    0 \to j_!\Gg\to j_*\Gg\to \Gg^{I_\infty}\otimes \bigoplus_{x\in \F_q,\De(x)=0}\Gg^{I_{\overline{x}}}\to 0.
\end{align}\label{SES1}

Here, $j:U\xhookrightarrow{}\P_1$. Note here that $I_{\overline{x}}$ is the inertia group of a place $\overline{x}$ over $x$. This in turn induces a long exact sequence on cohomology which simplifies to
\begin{align}
0\rightarrow\Gg^{I_{\infty}}\bigoplus_{x\in \F_{q},\Delta(x)=0}\Gg^{I_{x}}\rightarrow H_{c}^{1}(U_{q}\otimes\overline{\F}_{q}\vert{}\Gg)\rightarrow H^{1}(\mathbb{P}^{1}\otimes\overline{\F}_{q}\vert{}j_{*}\Gg)\rightarrow0
\end{align} where the kernel of this sequence is the 0th cohomology group of the cokernel of our original short exact sequence.
Again, see \cite{Michel1995} for details.

By the proof of Lemma 4.1 in \cite{Michel1995}, we see that $\text{Sym}^2\mathcal F$ is pure of weight $2$ so $H^{1}(\mathbb{P}^{1}\otimes\overline{\F}_{q}\vert{}j_{*}\Gg)$ is pure of weight 3 and $\Gg^{I_{\infty}}\oplus_{x\in \F_{q},\Delta(x)=0}\Gg^{I_{x}}$ is mixed of weight $\leq 2$.
To clarify, for constructible $\ell$-adic sheaves, if we have a weight $n$ subspace, $V$, then that means that $\frob$ acts with eigenvalues of magnitude $p^{n/2}$ on $V$.

For the weight 3 part (which makes up $\Cc_p^3$), we have to treat $H^1(\P_1,\sym^2\Ff)$ as a motive. For this proof, a motive is a generalization of the cohomology space. Here, we require the theory of motives since many results in the literature are phrased in terms of motives. See \cite{Milne2012Motives} for details on motives. 
\begin{remark}
    The space, $H^1(\P_1,\sym^2\Ff)$, is pure of weight 3. 
\end{remark}

\subsection{Bound on $H^1$ via Sato-Tate}


 The Sato-Tate group of a motive is a compact Lie group, $G_{ST}$, into which we can embed a representation that maps Frobenius elements to conjugacy classes.\footnote{As a side note, the cohomology space, $H^1(\P_1,j_*\sym^2\Ff)$ carries a symplectic pairing so has even dimension. Thus, we can say $G\subseteq USp(g)$.} The generalized Sato-Tate conjecture (see \cite{ffk12, LMFDB_ST_Group}) implies that the values of conjugacy classes of the Sato-Tate group are equidistributed with respect to the Haar measure.
We apply the generalized Sato-Tate conjecture to the motive $H^1(\P_1, j_*\sym^2\Ff)\otimes \overline{\Q}_\ell(3/2)$ as follows:
\begin{align}
    \lim_{N\to \infty}\f{1}{\pi(N)}\sum_{p\leq N}\tr(\frob_p\mid H^1(\P_1,j_*\sym^2\Ff)\otimes \overline{\Q}_\ell(3/2))\ =\ \int_{G_{ST}}\tr(g)\ d\mu
\end{align}
where $G_{ST}$ is the Sato-Tate group of $H^1(\P_1,j_*\sym^2\Ff)\otimes \overline{\Q}(3/2)$.

Tensoring by $\overline{\Q}(3/2)$ is referred to as a Tate ``twist''. Recall that the eigenvalues had magnitude $p^{3/2}$. The twist divides the eigenvalues by $p^{3/2}$, ensuring the eigenvalues of the twisted space lie on the unit circle. See \cite{Michel1995} for an example of this approach. This ensures that the eigenvalues are bounded, so we can express Sato-Tate with the equation above.

We prove that the right-hand integral vanishes with the following lemma.

\begin{lemma}
For a given pure motive $X$ of odd weight, all irreducible components of the standard representation of the Sato-Tate group of $X$ are orthogonal to the trivial representation.
\end{lemma}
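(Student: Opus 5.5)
The plan is to use the weight-parity structure of the Sato--Tate group. A motive $X$ of odd weight $w$ has Sato--Tate group $G_{ST}$ containing the scalar $-1$ acting on the standard representation. I will show that this central element makes every irreducible constituent of the standard representation orthogonal to the trivial character. Concretely, if $V=\bigoplus_i V_i$ is the decomposition of the standard representation into irreducibles, then $\langle \chi_{V_i},\mathbf 1\rangle = \int_{G_{ST}}\tr(g\mid V_i)\,d\mu = \dim V_i^{G_{ST}}$. So it suffices to show that no $V_i$ is the trivial one-dimensional representation, i.e.\ that $V^{G_{ST}}=0$.

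\textbf{Producing $-1$ in $G_{ST}$.} I would first recall the construction (as in \cite{ffk12}) of $G_{ST}$ as a maximal compact subgroup of the Zariski closure of the image of $\rho_\ell\otimes\overline{\Q}_\ell(w/2)$, restricted to the kernel of the weight (Lefschetz) character. Equivalently, it lies in the motivic Galois group after the Tate twist. The weight cocharacter $\mathbb{G}_m\to G$ acts on $X$ by $\lambda\mapsto\lambda^{w}$. Because this cocharacter is central in the Mumford--Tate or motivic group, it commutes with the Tate normalization, and after twisting, $\lambda=-1$ gives an element acting by $(-1)^w=-1$. For odd $w$, this element lies in the normalized group of unitary elements, hence in $G_{ST}$. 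In the symplectic situation of the footnote, where $G_{ST}\subseteq USp(g)$ and the Hodge circle $U(1)\subset G_{ST}$ acts with weights $z^{p-q}$ and $p+q=w$ is odd, it is also simply $-I\in$ the image of the Hodge circle at $z=-1$.

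\textbf{Killing invariants.} Since $-1\in G_{ST}$ acts by $-\mathrm{id}$ on $V$, it acts by $-\mathrm{id}$ on each $V_i$. Any $v\in V_i^{G_{ST}}$ therefore satisfies $v=-v$, so $v=0$. Hence no irreducible component is trivial, and by Schur orthogonality each $\int_{G_{ST}}\chi_{V_i}\,d\mu=0$. Summing over $i$ also gives $\int_{G_{ST}}\tr(g)\,d\mu=0$, which is what is needed to conclude that $\Cc_p^3$ averages to $0$.

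\textbf{Main obstacle.} The main obstacle is the first step: rigorously placing $-1$ inside $G_{ST}$ for an abstract pure motive of odd weight. This depends on the precise definition of $G_{ST}$ used (via the Hodge circle or via the $\ell$-adic algebraic monodromy group with its Lefschetz twist), and the twist by a half-integral power is only well defined after choosing $\sqrt p$. I would first try the Hodge-theoretic route: the Hodge structure on $H^1(\P_1,j_*\sym^2\Ff)$ has only types $(p,q)$ with $p+q=3$, so $z\mapsto z^{p-q}$ evaluated at $z=-1$ gives $-1$, because $p-q$ is odd. I would then use the standard expectation, built into the generalized Sato--Tate framework, that the Hodge circle lies in $G_{ST}$.
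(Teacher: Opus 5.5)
Your proof is correct, and it follows a genuinely different route from the paper's.

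\textbf{The paper's route.} The paper argues by contradiction. A trivial constituent of the standard representation of $G_{ST}$ is turned into a rank-one Galois subrepresentation (a submotive) of $X$. A separate lemma shows that the $\ell$-adic realization is de Rham, using the Leray and weight spectral sequences together with Faltings--Tsuji. The paper then invokes the one-dimensional case of the conjecture it cites from Taylor: a de Rham character of $\mathrm{Gal}(\overline{\mathbb{Q}}/\mathbb{Q})$ is a finite-order character times an integral power of the cyclotomic character. Such a character has even weight, contradicting the odd weight of $X$.

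\textbf{Your route.} You use the central element $-I\in G_{ST}$ instead. This makes the lemma immediate. By translation invariance of Haar measure it also gives $\int_{G_{ST}}\mathrm{tr}(g)\,d\mu=0$ directly, and in fact kills every odd moment. It needs no $p$-adic Hodge theory, and it avoids the step of turning a $G_{ST}$-fixed vector into a one-dimensional Galois subrepresentation, which the paper treats only briefly. The price is the structural input $-I\in G_{ST}$, which you flag as the main obstacle.

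\textbf{Closing that obstacle without the Hodge circle.} You do not need the Hodge-circle axiom; Serre's Frobenius-torus argument suffices.
\begin{itemize}
\item Every Frobenius eigenvalue $\alpha_i$ on $H^1(\mathbb{P}^1,j_*\mathrm{Sym}^2\mathcal{F})$ has absolute value $p^{3/2}$, and the weight $3$ is nonzero.
\item So any multiplicative relation $\prod_i\alpha_i^{a_i}=1$ forces $\sum_i a_i=0$.
\item Hence every character vanishing on the semisimple part of $\rho_\ell(\mathrm{Frob}_p)$ also vanishes on the homotheties $\lambda\mapsto\lambda^{\sum a_i}$. The Zariski closure $G_\ell$ therefore contains all homotheties, in particular $-I$.
\item The similitude of $-I$ for the symplectic pairing is $(-1)^2=1$, so $-I\in G^1_\ell$.
\item Being central of finite order, $-I$ lies in every maximal compact subgroup, hence in $G_{ST}$.
\end{itemize}
This closes your main obstacle unconditionally, so your route relies on the generalized Sato--Tate conjecture only for equidistribution, just as the paper's does.
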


We sketch the proof. Suppose the standard representation on $G_{ST}$ is not irreducible. We split it into the sum of irreducibles. Suppose one of these irreducibles is trivial. This induces a one-dimensional representation of $\gal(\overline{\Q}/\Q)$ which is a one-dimensional submotive of $X$. Note that this submotive is simple (as it has rank 1). Let us denote it by $X_0$.


\begin{restatable}{lemma}{derham}\label{derham}
    The Galois action of $\gal(\overline{\Q}/\Q)$ on $H^1(\P_1,j_*\sym^2\Ff)$ is de Rham.
\end{restatable}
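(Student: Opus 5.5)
We would prove that the Galois representation on $H^1(\P_1,j_*\sym^2\Ff)$ is de Rham by showing that it arises, up to a subquotient, from the étale cohomology of a smooth projective variety over $\Q$. Then we invoke Faltings' comparison theorem ($p$-adic Hodge theory): for any smooth proper variety $Y/\Q$, the $\gal(\overline{\Q}_\ell/\Q_\ell)$-representations $H^i_{\mathrm{et}}(Y_{\overline{\Q}},\Q_\ell)$ are de Rham (indeed crystalline at primes of good reduction). Being de Rham is preserved under subquotients, direct sums, tensor products and Tate twists. So it suffices to realize our space as such a subquotient. Here ``de Rham'' is understood for the restriction to a decomposition group at $\ell$.

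\textbf{Step 1 (geometric realization).} Let $W := V\times_{\A^1} V$ be the fibre product of the family with itself over the base, and let $\widetilde W \to \P^1_{\Q}$ be a smooth projective compactification. It exists after resolving singularities of the closure of the fibre square of a minimal Weierstrass model over $\P^1$, by Hironaka in characteristic zero. By the Künneth formula on the smooth locus, $R^2(f\times f)_*\Q_\ell$ contains $\Ff\otimes\Ff = \sym^2\Ff\oplus\wedge^2\Ff$, with $\wedge^2\Ff=\Q_\ell(-1)$. Hence $\sym^2\Ff$ is a direct summand of $R^2(f\times f)_*\Q_\ell|_U$, cut out by the symmetrization idempotent $\tfrac12(1+\sigma)$ for the swap involution $\sigma$.

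\textbf{Step 2 (from parabolic cohomology to the total space).} Apply the Leray spectral sequence for $\widetilde W\to\P^1$. By Deligne's decomposition theorem, this degenerates, and $H^1(\P^1, j_*R^2(f\times f)_*\Q_\ell)$ appears as a subquotient of $H^3(\widetilde W_{\overline\Q},\Q_\ell)$. This is the intermediate-extension piece: $j_*$ of a lisse sheaf on a curve is the intermediate extension up to shift. Alternatively, and more elementarily, we can follow Scholl's construction of motives for modular forms from Kuga–Sato varieties. Composing with the projector from Step 1, we obtain $H^1(\P_1,j_*\sym^2\Ff)$ as a $\gal(\overline{\Q}/\Q)$-equivariant subquotient of $H^3(\widetilde W_{\overline{\Q}},\Q_\ell)$. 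Faltings' theorem then gives the de Rham property.

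\textbf{Main obstacle.} The delicate point is Step 2. We must verify that the $j_*$-extension across the bad fibres and $\infty$ matches the contribution of the compactified total space in the Leray filtration, so that it is a genuine Galois-stable subquotient. This holds because the discrepancy terms are supported on finitely many points, namely the fibres $\widetilde W_x$ over the zeroes of $\De$ and $\infty$. Those terms are themselves cohomology of proper varieties over number fields, and hence Galois-stable. As a fallback, one can cite the general result that for a lisse sheaf of geometric origin, $H^1(\P^1, j_*\,\cdot)$ is of geometric origin (Beilinson–Bernstein–Deligne–Gabber), together with the theorem of Faltings–Tsuji that geometric-origin representations are de Rham.
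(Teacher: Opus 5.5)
Your argument is correct, and it takes a genuinely different route from the paper's. The paper stays on the open curve throughout:
\begin{itemize}
\item it writes $H^1(\P^1,j_*\sym^2\Ff)$ as a quotient of $H^1_c(U,\sym^2\Ff)$;
\item it realizes that as a summand of $H^1_c(U,R^2g_!\Q_\ell)$ for the open fibre square $g:Y\to U$;
\item it uses the compactly supported Leray sequence to place this inside $H^3_c(Y)$;
\item only then does it compactify $Y$ with normal crossings boundary and run the weight spectral sequence, finishing with the claim that de Rham representations are closed under subquotients \emph{and extensions}.
\end{itemize}
You instead compactify over all of $\P^1$ first. The ordinary Leray sequence for the proper map $\pi:\widetilde W\to\P^1$ then exhibits $H^1(\P^1,R^2\pi_*\Q_\ell)$ as a subquotient of $H^3$ of a single smooth projective variety, so you only ever need closure under subquotients. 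This is a real gain. De Rham representations are not closed under extensions in general: the extension of $\Q_\ell$ by $\Q_\ell$ cut out by $\log$ of the cyclotomic character is not even Hodge--Tate. So the paper's last step needs an extra argument, for instance that the pure weight-$3$ piece is a subquotient of $\mathrm{Gr}^W_3H^3_c(Y)$, which injects into $H^3(\overline Y)$. Your route avoids this entirely. The price is that you must handle the bad fibres and $\infty$, which the paper never touches because it works only over $U$.

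Three small corrections.

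First, the swap $\sigma$ acts on $R^1\otimes R^1\subset R^2$ with a Koszul sign. So $\tfrac12(1+\sigma)$ cuts out $\wedge^2\Ff$ together with a copy of $\Q_\ell(-1)$ from $R^0\otimes R^2\oplus R^2\otimes R^0$. The summand $\sym^2\Ff$ is the image of $\tfrac12(1-\sigma)$ composed with the K\"unneth projector onto $R^1\otimes R^1$. Only the direct-summand property is used, so this is harmless.

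Second, the decomposition theorem is unnecessary here. Over a curve, $E_2^{1,q}=E_\infty^{1,q}$ for degree reasons, exactly as the paper observes in its own compactly supported version.

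Third, your ``main obstacle'' is settled more directly than by appealing to the cohomology of the singular fibres:
\begin{itemize}
\item The adjunction $R^2\pi_*\Q_\ell\to j_*j^*R^2\pi_*\Q_\ell$ is defined over $\Q$, and its kernel and cokernel are supported on finitely many points.
\item Skyscraper sheaves have no $H^1$ or $H^2$, so it induces a Galois-equivariant surjection $H^1(\P^1,R^2\pi_*\Q_\ell)\twoheadrightarrow H^1(\P^1,j_*j^*R^2\pi_*\Q_\ell)$.
\item For this you should say explicitly that the resolution is an isomorphism over $U$, so that $j^*R^2\pi_*\Q_\ell$ really is the K\"unneth sheaf containing $\sym^2\Ff$.
\end{itemize}
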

See Section \ref{de Rham} for a proof.

\begin{remark}
    We provide a brief intuitive explanation for the previous lemma. This submotive arises from a geometric construction, so is locally algebraic and in turn is de Rham. More precisely, $H^1(\P_1,j_*\sym^2\Ff$) is a mixed motive because it is constructed from the six functors (under which the category of constructible motives is closed). See \cite{Cisinski_2019} for details. Thus, $H^1(\P_1,j_*\sym^2\Ff)$ is constructed from pure motives. The $\ell$-adic realization of pure motives is de Rham by \cite{taylor}. Thus, since  $H^1(\P_1,j_*\sym^2\Ff)$ is constructed from pure motives, its $\ell$-adic representation is also a de Rham representation because the category of de Rham representations is closed under subquotients and extensions.
\end{remark}

We apply Conjecture 1.3 in \cite{taylor}, which he states is proven for 1-dimensional representations. The conjecture states that for de Rham representations, the weights must always be even. However, the subrepresentation must inherit the odd weight of the motive, a contradiction.

Since the representation of the Galois group has no 1-dimensional subrepresentations, we see that the same conclusion holds for the representation of the Sato-Tate group on the normalized cohomology group. Thus, by Schur orthogonality relations, the trace of the representation of the Sato-Tate group pairs to 0 with the trivial character, and the integral evaluates to 0. This shows that $\Cc_p^3$ averages to 0 in the limit.

\subsection{Multiplicative Bad Reduction}

We have shown that assuming Sato-Tate, the $p^{3/2}$ terms average to zero. Thus, we only need to consider the lower-order terms. Our results on the $p$ term do not require Sato-Tate. In the following section, we apply standard results from representation theory, Galois theory, and algebraic number theory. See \cite{lang1994} and \cite{artin2010} for details.

Let us consider $\Gg^{I_{\overline{x}}}$ for a point $x$ with $\De(x)$ which is degenerate of multiplicative type.
As per \cite{Michel1995}, the inertia group acts on $\Ff$ by the matrix
\begin{align}
    \gamma \ \mapsto\ \begin{bmatrix}
    1 & t_l(\gamma)\\
    0 & 1
\end{bmatrix}
\end{align} 
where $t_l$ is the surjection $I_x\to I^{\text{tame}}_x.$

Let $e_1,e_2$ denote the chosen basis for $\Ff$. For $\Gg=\sym^2\Ff$, the basis is given by $e_1^2,e_1e_2,e_2^2$.
We see that $e_1^2$ is the only eigenvector, so $\Gg^{I_{\overline{x}}}$ has dimension 1. 

Note that the inertia group is normal, so Frobenius normalizes the inertia group. Thus, if $x$ is an eigenvector for the action of the inertia group, then $\frob_p x$ is also an eigenvector. Letting $\tau$ denote a generator for the inertia group, it is known that $\tau \frob_p=\frob_p \tau^p$. Since $\frob_pe_1$ is a scalar multiple of $e_1$, a quick computation shows that $\frob_p$ acts with eigenvalue $\pm 1$ on $e_1$. Thus, $\Gg^{I_{\overline{x}}}$ has weight 0. In particular, we have shown that the trace over the terms of $\Gg^{I_{\infty}}\bigoplus_{x\in \F_{q},\Delta(x)=0}\Gg^{I_{x}}$ coming from degeneracies of multiplicative type is exactly the number of such degeneracies. Let us denote this number $M_p$ and note that $M_p$ is bounded by 
\begin{align}
    |\Set{x\in \F_p|\De(x)=0}|
\end{align}
which is independent of $p$. Thus, this gives us an $O(1)$ contribution.

\subsection{Additive Bad Reduction}
Now, for the points which are degenerate of additive type, a similar calculation yields that the eigenvalues are exactly $\pm p$. As a verification, we can obtain these values from the Euler factors which are computed in \cite{dummigan2009}.

Note that in \cite{dummigan2009}, for $p\geq 5$, the inertia group is cyclic of order $d\in\{2, 3,4,6\}$. We see that $d$ is independent of $p$ and only depends on the order of the discriminant. 

First, let us consider $d\in \{3,4,6\}$. Here, $\Gg^{I_x}$ is of dimension 1 as the kernel of the monodromy matrix has dimension one. This follows the dimension calculation from above. See \cite{stiller1981monodromy} for details on the matrices.
Now, \cite{dummigan2009} states that if $p=1\pmod{d}$, then the group $H$, which is generated by the Frobenius element and a generator of the inertia group, is abelian. This corresponds to the case where the eigenvalue is $p$. The other case must be $p=-1 \pmod{d}$ since $d\in \{3,4,6\}$. Thus, the coefficient of the $p$ term will be $\pm A_p$ where $A_p$ is the number of fibers of additive reduction and the sign is positive if and only if $p=1\pmod{d}$ for some $d\in \{3,4,6\}$. 

Recall that the number of fibers of additive reduction in $\F_p$ is given by the number of roots of a polynomial $P(T)$ over $\F_p$. This polynomial is the radical of $\gcd(A(T),B(T)$
\cite{silverman2009}.
Let $(\si, V)$ denote the permutation representation of $\gal(K,\Q)$ acting on the roots of $P(t)$ where $K$ is the splitting field of $P(T)$. Let $(\chi_d,W)$ denote the non-trivial 1-dimensional representation of $\gal(\Q(\zeta_d)/\Q)$. 
Now, we see that the number of roots of $P(T)$ over $\F_p$ is given by the trace of $\frob_p$ acting on $V$. 
The trace of $\frob_p$ acting on $W$ is determined by the value of $p\pmod{d}$. Thus, $\pm A_p$ is given by multiplying the traces of these two representations. 
Define 
\begin{align}
    L\ =\ K(\zeta_d)
\end{align}
to be the compositum field. Then, note that $\gal(K/\Q)$ and $\gal(\Q(\zeta_d)/\Q)$ are quotients of $G:=\gal(L/\Q)$. Thus, we can extend $\rho$ and $\si$ to be representations of $G$. We can define a new representation $V\otimes W$ giving us
\begin{align}
    \pm A_p \ =\ \tr(\frob_p|W)\tr(\frob_p|V)\ =\ \tr(\frob_p|V\otimes W)\ =\ \chi_{\rho\otimes \si}(\frob_p).
\end{align}
Now, we want to find the average value of this quantity. Recall that $K$ and $\Q(\zeta_d)$ were finite extensions, so $L/\Q$ is also a finite extension. Thus, $G$ is finite, and we can apply the Chebotarev density theorem \cite{lang1994}. This states that for a conjugacy class, $C\subseteq G$, the proportion of all primes such that $\frob_p\in C$ is exactly 
$|C||G|$.
Note that the trace of a representation is constant on conjugacy classes. Letting $\pi_C(x)$ denote the number of primes $p\leq x$ where $\frob_p\in C$, we have
\begin{align}
    \lim_{x\to\infty}\f{1}{\pi(x)}\sum_{p\leq x}\tr(\frob_p|V\otimes W))\ =\ \lim_{x\to\infty}\sum_C\f{\pi_C(x)}{\pi(x)}\tr((\rho\otimes\si)(g_c))
\end{align}
where $g_c$ is some element in $C$. Thus, we simplify to
\begin{align}
    \lim_{x\to\infty}\sum_C\f{\pi_C(x)}{\pi(x)}\tr((\rho\otimes\si)(g_c))&\ =\ \sum_C \left(\lim_{x\to\infty}\f{\pi_c(x)}{\pi(x)}\right)\tr((\rho\otimes\si)(g_c))\nonumber\\
    &\ =\ \sum_C \f{|C|}{|G|}\tr((\rho\otimes\si)(g_c))\nonumber\\
    &\ =\ \sum_{g\in G} \tr((\rho\otimes\si)(g))\nonumber\\
    &\ =\ \la \chi_{\rho}\otimes\chi_{\si},\chi_{\text{triv}}\ra\nonumber\\
    &\ =\  \la \chi_\si,\chi_\rho\ra.
\end{align}
Now, if the $\gal(K,\Q)$ representation contains a one-dimensional subrepresentation, then this subrepresentation must be trivial. Thus, we see that $ \la \chi_\si,\chi_\rho\ra=0$. We have show that the coefficient of the $p$ term, $\pm A_P$, averages to zero.

Now, we consider the $d=2$ case. 
Recall that an additive bad reduction at $t_0$ means that $A(t_0)=B(t_0)=0 \pmod p$. There are at most $\deg(\De)$ such points and two cases where this happens. 

In the first case, $A(t_0)=B(t_0)=0$ over $\Q$. Then, the fiber at $t_0$ is of additive bad reduction for all primes (sufficiently large).

In the second case, $A(t_0)=C_0$ and $B(t_0)=D_0$, and without loss of generality, $C_0, D_0\neq 0$ over $\Q$. Then, $A(t_0)=B(t_0)=0\pmod p$ only at primes where $p\mid \gcd(C_0,D_0)$, that is, at finitely many primes. We can ignore these contributions in the average since they only occur for finitely many primes.

Thus, without loss of generality, we may assume all fibers of additive reduction with $d=2$ occur at infinitely many primes.\footnote{This is the case where the Kodaira type is $I_0^*$.}
Here, the monodromy action is given by the matrix \begin{align}
    \begin{bmatrix}
        -1 & 0\\
        0 & -1
    \end{bmatrix};
\end{align}
see \cite{stiller1981monodromy}. For the symmetric square, the inertia group acts as $(-I)^2=I$.
Thus, we see the entire space is an eigenspace and $\Gg^{I_{t_0}}=\Gg_{\eta}$, which denotes the generic local fiber. Now, we need to compute the average of
\begin{align}
    \tr(\frob_p|\Gg_{\eta}).
\end{align}

It is known that for additive singularities with $d=2$, we need a quadratic extension to resolve the singularity \cite{silverman}. Let us consider the formal neighborhood of $t_0$ by considering the local field $K:=\Q((t-t_0))$. Adjoining $\al:=\sqrt{t-t_0}$ gives us an isomorphism between the local generic fiber, $E_{k}$, and some smooth curve $\Ee$ over $K$ which has good reduction at $t_0$. 
Note that this isomorphism is over $K(\al)$.
Every element of $\gal(\overline{K}/K)$ maps $\al \mapsto \pm \al$. This defines a character $\chi:\gal(\overline{K}/K)\to \{\pm1\}$ that maps a Galois element to the parity of its action on $\al$. Note that $\Ff_{\eta}=H^1(E_K)$ is a representation of $\gal(\overline{K}/K)$, call this representation $\si_{0}$. Let $\si$ denote the representation acting on $H^1(\Ee)$, and we see that 
\begin{align}
    \si\ =\ \si_0\otimes \chi.
\end{align}
Thus, we have 
\begin{align}
    \sym^2(H^1(\Ee))\ =\ \sym^2(\Ff_{\eta})\otimes \chi^2\ =\ \sym^2(\Ff_{\eta})\ =\ (\sym^2\Ff)_\eta.
\end{align}
We define $E_{0}$ as the special fiber of $\Ee$ at $t_0$ and note that it is non-singular since $\Ee$ had good reduction at $t_0$. By the Neron-Ogg-Shafarevich condition \cite{silverman}, we see that $I_{t_0}$ acts trivially on $H^1(\Ee)$. Thus, the action of $\gal(\bar{K}/K)$ factors through an action of $\gal(\bar{\Q}/\Q)$.
This gives us that $\gal(\bar{\Q}/\Q)$ acts the same on both $H^1(\Ee)$ and $H^1(E_0)$. Thus, they are isomorphic representations, and 
\begin{align}
    \sym^2(\Ff)_\eta\ =\ \sym^1(H^1(\Ee)\ =\ \sym^2(H^1(E_0)).
\end{align}
Thus, it suffices to compute the average of
\begin{align}
    \tr(\frob_p|(\sym^2(H^1(E_0))).
\end{align}

Here, $H^1(E_0)$ is a 2-dimensional vector space since it is non-singular, so we can apply 
\begin{align}
    \tr(\frob_p|(\sym^2(H^1(E_0)))\ =\ \tr(\frob_p|H^1(E_0))^2-p\ =\ a_{E_0}(p)^2-p.
\end{align}
By \cite{serre2012lectures}, $a_{E_0}(p)^2/p$ averages to 1 in the limit. Thus, we see that $\tr(\frob_p|\Gg^{I_{t_0}})$ must grow on the order of at least $p$. We already know it cannot grow faster than $p$ since we are in the weight 2 part. 
We have also shown that its coefficient, $a_{E_0}(p)^2/p-1$, averages to 0. 
Thus, we see that this $\Gg^{I_{t_0}}$ contributes a $p$ term whose coefficient averages to 0.

Thus, the total coefficient of the $p$ term, which arises from $\tr(\text{Frob}_{p, t}\mid \sym^2\Ff)$, is, on average, zero.

\subsection{Final Computation}

We have computed the trace at the non-singular fibers, i.e., where $\De(t)\neq0$. Now, let us count the contribution from the points where $\De(t)=0$.

Let us use $B_p$ to denote the number of these points in $\F_p$. At these points, $|a_t(p)|\leq 1$, so we get a contribution of at most $B_p$, i.e., an $O(1)$ contribution.

Finally, we have
\begin{align}
    \Aa_2(p)&\ =\ \sum_{t\in \F_p}(p)(\tr(\frob_p|\Ff))^2\nonumber \\
    &\ =\  \sum_{t\in U(\F_p)}(\tr(\frob_p|\Ff))^2+\sum_{\De(t)=0}(\tr(\frob_p|\Ff))^2\nonumber \\
    &\ =\  \sum_{t\in U(\F_p)}\left(\tr(\frob_p|\sym^2\Ff)+p\right)+O(1)\nonumber \\
    &\ =\  -C_pp^{3/2}-D_pp+p(p-B_p) +O(1)
\end{align}
where $D_p$ is the term arising from the fibers of additive reduction.
We have shown that $C_p$ averages to 0 in the limit, and that $D_p$ averages to 0. Additionally, $\De(t)$ has at least one root on average over the primes since the average number of roots is given by the number of irreducible factors of $\De(t)$. \cite{serre2012lectures}. Thus, the average of $B_p$ is $\geq 1$, so the $-B_p$ term averages to a negative value. Thus, the coefficient of the $p$ term, $\mathcal{C}^2_p=-B_p$, averages to a negative value in the limit.

\subsection{The submotive is a de Rham representation}\label{de Rham}
\derham*
\begin{proof}
We aim to realize our motive $H^1(\P_1,j_*\sym^2\Ff)$ as a representation constructed by subquotient and extension of the $\ell$-adic cohomology of some smooth proper variety.

First, we recall that $H^1(\P_1,j_*\sym^2\Ff)$ is a quotient of $H^1_c(U,\sym^2\Ff)$ (see equation \ref{SES1}).  Note here that we consider these as spaces over $\overline{\Q}$. Recall that $\Ff=R^1f_!\Q_\ell$ where $f:X\to U$ and $X$ is the elliptic surface over $\Z$. We let $g: Y:=X\times_U X\to U$ be the fiber product. Note that $\Ff\otimes \Ff$ is a direct summand of $R^2g_!\Q_\ell$ and $\sym^2\Ff$ is a quotient of $\Ff\otimes \Ff$. Thus, we can let $\Gg=R^2g_!\Q_\ell$ and then we see that $H^1_c(U,\sym^2\Ff)$ is a direct summand of $H^1_c(U, \Gg)$. Note here that exact sequences split since we are working over $\Q_\ell$.

Next, we apply the Leray spectral sequence for compactly supported pushforward to $g: Y\to U$. We have
\begin{align}
    E_2^{p,q}\ = \ H^p_c(U,R^qg_!\Q_\ell)
\end{align}
which converges to $H^{p+q}_c(Y,\Q_\ell)$.
Note that $E_{2}^{-1,3}=H^{-1}_c(U,R^3g_!\Q_\ell)$ vanishes and \begin{equation}
    E_{2}^{3,1}=H^{3}_c(U,R^2g_!\Q_\ell)
\end{equation} also vanishes since $U$ is 1-dimensional. Considering the differential $d_r:E_r^{p,q}\to E_r^{p+r,q-r+1}$, we see that the terms before and after $E_2^{1,2}$ are 0. Thus, $E_3^{1,2}=E_2^{1,2}$.
However, we also see that for $r\geq 2$, we have that $E_2^{1-r,3+r}$ and $E_2^{1+r,3-r}$ vanish for the same reasons as above. Thus, the terms before and after $E_3^{1,2}$ are 0 and $E_3^{1,2}=E_4^{1,2}$. We proceed inductively and find that \begin{align}
    E_\infty^{1,2}\ =\ E_2^{1,2}.
\end{align}
Thus, $E_2^{1,2}=H^1_c(U, \Gg)$ is a graded piece, i.e., a subquotient, of $H^3(Y,\Q_\ell)$.

Now, since we are in characteristic zero, we apply Hironaka's theorem (see page 89 of \cite{peters2008mixed}) which guarantees that we can find a smooth compactification, i.e., a smooth proper variety $\overline{Y}$ of $Y$ such that $D:=\overline{Y}\setminus Y$ is a normal crossings divisor. Let $D_1,\dots, D_n$ denote the irreducible components of $D$. We consider the weight spectral sequence 
\begin{align}
    E_1^{-r,k+r}\ =\ \bigoplus_{|I|=r}H^{k-r}(D_{I},\Q_\ell)
\end{align}
where $D_I=\cap_{i\in I}D_i$. This converges to $H_c^k(Y,\Q_\ell)$. Note that the $D_I$ are smooth and proper as they are closed subsets of $\overline{Y}$. Thus, $H_c^3(Y,\Q_\ell)$ is constructed through subquotients and extensions of cohomology groups of smooth, proper varieties over $\Q$.

Now, Faltings and Tsuji have proven that the $\ell$-adic cohomology of smooth proper varieties over $\Q_\ell$ are de Rham representations of $\gal(\overline{\Q_\ell}/\Q_\ell)$ (see Theorem 5.33 in \cite{fontaine_ouyang_2008}). Note the category of de Rham representations is closed under subquotients and extensions (see Theorems 2.13 and the beginning of Section 5.2.5 in \cite{fontaine_ouyang_2008}). Thus, since $H^1(\P_1,j_*\sym^2\Ff)$ can be obtained from de Rham representations via subquotients and extensions, we see that it is a de Rham representation.
\end{proof}

\section{Examples}
We verify that the results from the proof agree with some known computations.

\subsection{The family $y^2=x^3+x+T^3$}
\,

The family, $E_t:y^2=x^3+x+T^3$, is studied in \cite{batterman2024applications}. We can clearly see that the $j$-invariant of $E_t$ is non-constant. They noted that for $p=2 \pmod 3$, the second moment is 
\begin{align}
    \Aa_2(p)\ =\ p^2+p.
\end{align}
This family was identified as a potential counterexample for the bias conjecture since the coefficient of the $p$ term is positive for $p=2 \mod 3$. In particular, it is positive for half the primes.

Here, we verify that this aligns with our results, showing that it is indeed possible for the coefficient of $p$ to be positive for certain $p$.

The discriminant is given by $\De(t)=16(4+27t^6)$. Let us find when this is equal to 0 for $p=2\pmod 3$. Substituting $s=t^2$, we have
\begin{align}
    \De(t)=0\implies s^3=-\f{4}{27}.
\end{align}
Since $p=2\pmod 3$, unique cube roots exist, and we have some $s_0$ that satisfies the equation. Since $s_0$ is a square, $s_0^3$ must be a square. We see that 
\begin{align}
    -\f{4}{27}\ =\ -3\cdot\frac{2^2}{9^2}
\end{align}
so $-3$ must be a square. However, the Legendre symbol
\begin{align}
    \left(\f{-3}{p}\right )
\end{align}
is equal to 1 if and only if $p=1\pmod 3$. Thus, $s_0$ has no squares, and the discriminant has no roots for $p=2\pmod 3$.

For $p=1\pmod 3$, we note that 
\begin{align}
    -\f{4}{27}\ =\ -3\cdot \frac{2^2}{9^2}
\end{align}
is always a square since $-3$ is always a square for $p = 1\pmod 3$.
Then, we need to check if 
\begin{align}
    -\f{4}{27}\ =\ -4\cdot\frac{1^3}{3^3}
\end{align}
is a cube. Since $-1$ is a cube, we see that this is equivalent to checking if $4$ is a cube, i.e., if $2$ is a cube. It is known that this happens for a third of the primes $p=1\pmod 3$. Since $\De'(t)=0$ only at $t=0$, we see that we do not have repeated roots. Thus, we have no roots if $2$ is not a cube, and we have six roots if $2$ is a cube.

Next, we check if $\infty$ is of additive reduction. If we apply the transformation $s=1/t$, we get
\begin{align}
    A'(s)\ =\ A(1/t)s^{4k}\notag\\
    B'(s)\ =\ B(1/t)s^{6k}
\end{align}
for the minimal $k$ such that these values are rational. Letting $k=1$, we get 
\begin{align}
    E_s:y^2\ =\ x^3+s^4x+s^3.
\end{align}
At $s=0$, we have that $A'(0)=B'(0)$, so $t=\infty$ is of additive reduction for all $p$. We see that the minimal order of the determinant is 6, so this yields the $d=2$ case (see \cite{dummigan2009}).

We consider this curve over $\Q((s))\sqrt{s}$. We make the substitution $X=\al^2x=sx$ and $Y=\al^3y=s\sqrt{x}y$ giving us
\begin{align}
    Y^2\ =\ X^3+s^2X+1.
\end{align}
We plug in $s=0$ to get the curve
\begin{align}
    Y^2\ =\ X^3+1.
\end{align}

Let us find the trace. Firstly, for $p=2 \pmod 3$, we see that $x\mapsto x^3$ is an automorphism, so $X\to X^3+1$ is a bijection. We see that we have $(p+1)/2$ quadratic residues in $\F_p$, we have $2(p+1)/2$ solutions. Thus, 
\begin{align}
    a_{E_0}(p)\ =\ p+1-(p+1)\ =\ 0.
\end{align}

Then, for the $p=1\pmod 3$ case, we follow Section 18.3 from \cite{ireland1990classical}. Following their derivation, for $p=1\pmod 3$, we get
\begin{align}
    a_{E_0}(p)\ =\ \left(\f{4}{\pi}\right)(\pi+\overline{\pi})\ =\ \pi+\overline{\pi}
\end{align}
where $\pi\bar{\pi}=p$ for some $\pi\in \Z[\om]$ with $p=2\pmod 3$. We can write $\pi=c+b\om$ with $c^2+3d^2=p$. Thus, $c\neq 0$ and the squared trace is given by 
\begin{align}
    a_{E_0}(p)^2\ =\ 4c^2.
\end{align}
Now, this value can get arbitrarily close to 0 since the normalized values of $\pi$ are equidistributed on the circle (see \cite{serre1968abelian}). However, we see that the average value of $4\cos^2\theta=4c^2$ is exactly 2.

There are no fibers of bad reduction away from infinity, so $B_p=0$. Thus, the coefficient of the $p$ term in the second moment is given as follows:
\begin{align}
    \begin{cases}
        +p & p=2\pmod 3\\[2mm]
        -4\cos^2\theta & p=1\pmod 3, \, \left(\f{2}{p}\right)_3=-1\\[3mm]
        -4\cos^2\theta-6 & p=1\pmod 3, \, \left(\f{2}{p}\right)_3=1\\
    \end{cases}
\end{align}
for some value $\theta\in[0,\pi]$. Recall that $2$ is a square mod 3 for a third of the primes $p=1\pmod 3$. Thus, we have
\begin{align}
    \lim_{x\to\infty}\f{1}{\pi(x)}\sum_{p\leq x}\f{\Aa_2(p)-p^2-C_pp^{3/2}}{p}\ =\ 
    \left(1-\f{1}{\pi}\int_{0}^{\pi}4\cos^2\theta\ d\theta\right)-\left(\f{1}{6}\cdot 6+\f{5}{6}\cdot 0\right)\ =\ -1.
\end{align}
This shows that the average of the coefficient of the $p$ term is indeed negative, although the coefficient is $+1$ for half of the primes.
\subsection{The family $y^2=x^3+x^2+2T+1$}\,

In Weierstrass form, this family is given by 
\begin{align}
    E_t:y^2\ =\ x^3-\f{1}{3}x+\left(2t+\f{29}{27}\right)
\end{align}
and its discriminant is given by
\begin{align}
    \De(t)\ =\ -12(2t+1)(54t+31).
\end{align}
It is clear that the $j$-invariant in nonconstant. We see that there are two fibers of multiplicative reduction, $-1/2$ and $-31/54$. Additionally, $\infty$ is a fiber of additive reduction. We see that the discriminant has order 2, so $d=\f{12}{\gcd(12,2)}=6$. Recall that the Legendre symbol yields
\begin{align}
    \left(\f{-3}{p}\right)\ =\ \begin{cases}
        +1& p=1\pmod 3\\
        -1 & p=2 \pmod 3
    \end{cases}\ =\ \begin{cases}
        +1& p=1\pmod 6\\
        -1 & p=2 \pmod 6.
    \end{cases}
\end{align}
Thus, the coefficient of the $p$ term is given by
\begin{align}
    \Aa_2(p)-p^2-C_pp^{3/2}\ =\ -2p-\left(\f{3}{p}\right),
\end{align}
which agrees with the computation in \cite{Miller2002}. 

\section{Future Work}
\subsection{ Notes on Sato-Tate}
Considering the $H^1$ space as a representation $\rho$ of $\gal(\overline{\Q}/\Q)$, applying Chebotarev's density theorem is difficult, because one must show the Galois representation factors through a finite group, or one must show that $\Set{g\in\gal(\bar{\Q}/\Q)|\chi_\rho(g)=x}$, for some trace $x$, has measure zero boundary. Deligne's equidistribution theorems do not apply here since we are applying Frobenius to a cohomology group. It may be possible to show that $H^1(\P_1,j_*\sym^2\Ff)$ is automorphic which is a case where Sato-Tate has been studied more carefully. 

\subsection{Constant $j$-invariant}

We see the iso-trivial families arise from one of the following \cite{conrad2005root}:
\begin{itemize}
    \item quadratic twists, 
    \item quartic twists when $j=1728$, and
    \item cubic or sextic twists when $j=0$.
\end{itemize}

This may split into further cases depending on whether or not the curve has complex multiplication. A similar proof method may apply.

In particular, for $j=0,1728$, the same proof method should work. The second cohomology should vanish since the base curve is twisted by a cubic, quartic, or sextic character $\chi$. When $\chi^2$ is non-trivial, \cite{Michel1995} notes that 
\begin{align}
    H^2(U,\sym^2\Ff))\ =\ H^2(U,\sym^2\Ff_0)\otimes \Ll_{\chi^2})
\end{align}
where $\Ff_0=R^1f_!\Q_\ell$ is the untwisted sheaf on $\P^1$ corresponding to the fibers of the generic curve $E\times_\Z \P^1$. Note that this is the untwisted $E(T)$. One must carefully check that the rest of the argument holds in this case. 

For $j\neq 0,1728$, the proof may be more complicated. In this case, it may be possible that $H^1$ will vanish, and $H^2$ will contribute terms to the second moment.

\section{Acknowledgments}
We would like to thank Professors Matija Kazalicki, Bartosz Naskr\k{e}cki, and Adam Logan for reading our proof and providing comments. This research was supported with funding from the National Science Foundation (grant DMS2341670), the University of Chicago, the University of Michigan, and Williams College. The authors are also grateful for the support from Texas A\&M University.

\newpage

\nocite{*}
\printbibliography

\end{document}